\title{The metric Menger problem}
\author{Júlia Baligács$^1$ and Joseph MacManus$^2$}
\date{8th March, 2024}
\address{$^1$Fachbereich Mathematik, TU Darmstadt, Dolivostrasse 15, 64293 Darmstadt}
\address{$^2$Mathematical Institute,  University of Oxford, Oxford, OX2 6GG, UK}
\email{baligacs@mathematik.tu-darmstadt.de, macmanus@maths.ox.ac.uk}
\DeclareMathOperator{\dist}{d}
\DeclareMathOperator{\chck}{check}
\newlang{\MM}{MM}
\newcommand{\N}{\mathbb{N}}
\newcommand{\bigO}{\mathcal{O}}
\renewcommand{\subset}{\subseteq}
\newtheorem{theorem}{Theorem}[section]
\newtheorem*{theorem*}{Theorem}
\newtheorem{lemma}[theorem]{Lemma}
\newtheorem{question}[theorem]{Question}
\newtheorem{observation}[theorem]{Observation}
\theoremstyle{definition}
\newtheorem{definition}[theorem]{Definition}
\newtheorem*{definition*}{Definition}
\newtheorem*{problem*}{Problem}
\newcommand{\myitem}[1]{%
\item[#1]\protected@edef\@currentlabel{#1}%
}
\begin{document}

\maketitle

    

\begin{abstract}
We study a generalization of the well-known disjoint paths problem which we call the \textit{metric Menger problem}, denoted $\MM(r,k)$, where one is given two subsets of a graph and must decide whether they can be connected by $k$ paths of pairwise distance at least~$r$. 
We prove that this problem is $\NP$-complete for every $r\geq 3$ and $k\geq 2$ by giving a reduction from $3\SAT$. This resolves a conjecture recently stated by Georgakopoulos and Papasoglu. 
On the other hand, we show that the problem is in $\XP$ when parameterised by treewidth and maximum degree by observing that it is `locally checkable'. In the case $r\leq 3$, we prove that it suffices to parameterise by treewidth. We also state some open questions relating to this work. 
\end{abstract}




\section*{Introduction}

    A classical theorem of Menger \cite{menger} states that the size of a smallest vertex cut separating two vertices $u$ and $v$ in an undirected graph $G$ is equal to the maximum number of pairwise internally disjoint paths connecting $u$ to $v$. As an important precursor to the celebrated max-flow min-cut theorem, Menger's theorem has found applications across graph theory and beyond. 

    Recently, a light has been shone upon Menger's theorem within the fast-growing area of `coarse graph theory'. To summarise, coarse graph theory (and more generally, coarse geometry) relates to metric properties which can be seen on a `large scale'. Much of the motivation for this area comes from the field of geometric group theory, where algebraic properties of finitely generated groups can often be deduced from the coarse geometry of its Cayley graphs. Key themes include, but are not limited to, finding coarse characterisations of certain classes of graphs \cite{fujiwara2023coarse,  kerr2023tree, macmanus2023accessibility}, computing coarse invariants such as the Assouad--Nagata dimension or asymptotic dimension of graphs \cite{bonamy2023asymptotic, esperet2023coarse, fujiwara2021asymptotic}, and proving coarse variants of classical results in graph theory \cite{albrechtsen2023induced, georgakopoulos2023graph}. 
    
    In particular, a coarse variant of Menger's theorem was conjectured independently by Albrechtsen et al.~\cite {albrechtsen2023induced} and Georgakopoulos--Papasoglu \cite{georgakopoulos2023graph}. Roughly speaking, this conjecture states that given a (possibly infinite) connected graph $G$, two subsets $A , Z \subset V(G)$, and $k \geq 2$, we should either find $k$ paths connecting $A$ to $Z$ which are pairwise `sufficiently far apart', or we should find $k$ balls of bounded radius in our graph, the removal of which will disconnect $A$ and $Z$. A counterexample to this conjecture was recently found in \cite{nguyen2024counterexample}, though several closely-related variants of this conjecture remain open  (see \cite{nguyen2024counterexample} for precise statements). 

    Thinking specifically about finite graphs, it makes sense to consider an algorithmic variant of the coarse Menger conjecture. Given two subgraphs $P$, $Q$ of a graph~$G$ and $m \geq 1$, we say that $P$ and $Q$ are \textit{$m$-disjoint} if every path connecting a vertex of $P$ to a vertex of $Q$ has length at least $m+1$. Note that two paths are \mbox{0-disjoint} if and only if they are vertex disjoint. We now define the following decision problem known as the metric-Menger problem, first introduced by Georgakopoulos--Papasoglu in \cite{georgakopoulos2023graph}.\footnote{More precisely, they define a slightly less general class of problems. In their notation, the problem `$\MM3$' they define corresponds to $\MM(3,3)$ in our notation.}

    \begin{problem*}
        Given $r, k \geq 1$, define $\MM(r, k)$ as follows:
        
            \textit{Instance:} Finite graph $G$, subsets $A, Z \subset V(G)$.
            
            \textit{Question:} Are there $k$ pairwise $(r-1)$-disjoint paths $P_1, \ldots P_k$ from $A$ to $Z$?
    \end{problem*}

When setting $r = 1$, it is easy to see that $\MM(1,k)$ is polynomial-time reducible to the classical disjoint paths problem (see Lemma~\ref{lem:mmp-to-mm}), in which the task is to connect $k$ given pairs of vertices by $k$ vertex-disjoint paths. One of the highlights of the Robertson--Seymour graph minors project was a cubic-time solution to the disjoint paths problem given we fix the number of desired paths \cite{robertson1995graph}. This was later improved to quadratic time in  \cite{kawarabayashi2012disjoint}. In particular, we easily see that $\MM(1,k)$ is in $\P$ for every $k \geq 2$. On the other hand, if $k$ is part of the input, the disjoint paths problem is $\NP$-complete. In fact, it is one of Karp's original NP-complete problems~\cite{Karp75}.

In contrast to $\MM(1,k)$ being in $\P$, Georgakopoulos--Papasoglu conjectured that \linebreak $\MM(3,3)$ is $\NP$-complete (see Conjecture~9.6 of \cite{georgakopoulos2023graph}). In this note, we answer this conjecture in the affirmative and go slightly further.

\begin{restatable}{alphtheorem}{npc}\label{thm:np-complete}
For every $r \geq 3$, $k \geq 2$, the problem $\MM(r,k)$ is $\NP$-complete, even on graphs of degree at most four.
\end{restatable}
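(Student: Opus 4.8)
The plan is to prove both directions of $\NP$-completeness separately. Membership in $\NP$ is routine: a certificate is a list of $k$ simple $A$--$Z$ paths $P_1,\dots,P_k$, each of length at most $|V(G)|$, and one can verify in polynomial time that they connect $A$ to $Z$ and that each pair is $(r-1)$-disjoint, the latter by computing $\dist(P_i,P_j)$ with a breadth-first search from $P_i$ and checking it is at least $r$. The substance of the theorem is $\NP$-hardness, which I would establish by a polynomial reduction from $3\SAT$. I would first build the reduction in the base case $r=3$, $k=2$, and afterwards bootstrap to all $r\geq 3$ and $k\geq 2$.

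For the base case, given a $3\SAT$ instance with variables $x_1,\dots,x_n$ and clauses $C_1,\dots,C_m$, I would construct $G$ out of two long disjoint ``corridors'' lying far apart, an \emph{assignment corridor} to be traversed by $P_1$ and a \emph{verification corridor} to be traversed by $P_2$, with endpoints placed in $A$ and $Z$ so that (up to the pairing enforcement below) $P_1$ must run along the first and $P_2$ along the second. Along the assignment corridor I would insert one \emph{variable gadget} per $x_i$: a pair of parallel rails $T_i, F_i$ which $P_1$ must choose between, encoding the truth value of $x_i$. Along the verification corridor I would insert one \emph{clause gadget} per $C_j$, forcing $P_2$ to pass through a small diamond offering three parallel micro-routes, one for each literal of $C_j$. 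The two corridors are then coupled metrically: for each occurrence of a literal $\ell$ in a clause, I place a \emph{literal vertex} on the corresponding micro-route of $P_2$, and I route the rail of $P_1$ that \emph{falsifies} $\ell$ (namely $F_i$ if $\ell=x_i$, and $T_i$ if $\ell=\bar x_i$) on a short detour that comes within distance $r-1$ of that literal vertex, while the opposite rail stays at distance $\geq r$ from it. Thus $P_1$'s chosen route blocks exactly the literal vertices of the literals it sets to false.

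Correctness then follows the intended dictionary. Since $P_2$ must stay at distance $\geq r=3$ from $P_1$, it may traverse a given micro-route only if the associated literal is \emph{not} blocked, i.e.\ is true; as $P_2$ is forced through every clause gadget, it reaches $Z$ if and only if every clause contains a true literal, i.e.\ the formula is satisfiable. Conversely a satisfying assignment tells $P_1$ which rail to take in each variable gadget and tells $P_2$ which true literal to use in each clause, and one checks that the resulting two paths are genuinely $(r-1)$-disjoint. I expect the main obstacle to lie precisely here, in controlling the \emph{global} metric rather than mere vertex-disjointness: the blocking mechanism deliberately brings the two corridors within distance $r-1$ of one another, and one must ensure that these near-approaches neither let a single clever path weave back and forth between the two corridors (defeating the pairing of sources to sinks) nor create unintended shortcuts that make an honestly-routed pair fail the distance bound somewhere far from the intended blocking site. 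Handling this requires the near-approaches to be realised through pendant vertices that do not permit through-travel, together with a layout argument bounding all cross-corridor distances from below except at the designated literal vertices.

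Finally I would extend to arbitrary parameters. To pass from $r=3$ to any $r\geq 3$ it suffices to subdivide edges so as to retune every relevant distance: the condition ``$r \geq 3$'' is exactly what leaves enough metric room to make ``blocked'' ($\dist\leq r-1$) and ``open'' ($\dist\geq r$) coexist in a bounded-degree gadget. To pass from $k=2$ to any $k\geq 2$, I would pad the instance with $k-2$ private corridors, each a path between a fresh source--sink pair, placed pairwise at distance $\geq r$ and at distance $\geq r$ from the main construction, so that every solution must devote $k-2$ of its paths to the padding and the remaining two to the assignment/verification corridors. Throughout, all branch points (rail splits, clause diamonds, detour attachments) have degree at most four, or can be replaced by short binary trees without affecting the distances above, yielding the claimed degree bound.
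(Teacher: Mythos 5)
Your overall architecture coincides with the paper's: two corridors (the paper's $G_1$ and $G_2$), variable gadgets made of parallel rails, clause gadgets made of three parallel micro-routes, the falsifying rail placed at distance exactly $r-1$ from the corresponding literal vertex, membership in $\NP$ checked as you describe, and the passage from $k=2$ to general $k$ by padding with $k-2$ fresh source--sink pairs. The gap is in the confinement mechanism, which you correctly identify as the crux but do not actually supply. To make a rail vertex and a literal vertex lie at distance $r-1$ you must put a genuine path of length $r-1$ between them in the graph; there is no way to realise this ``through pendant vertices that do not permit through-travel'', since pendant attachments do not decrease any distance between the two corridors. Once these connecting paths exist they are traversable, so nothing in your construction prevents a solution path from starting in one corridor, crossing over, and finishing in the other, nor prevents the path starting at the assignment-corridor source from ending at the verification-corridor sink (recall that in $\MM(r,k)$ the sets $A$ and $Z$ are unordered, so the pairing is not given). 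A ``layout argument bounding cross-corridor distances from below'' controls distances, not traversal, and so cannot close this.

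The paper's fix is a concrete extra gadget that you are missing. Attach tails $a_1,\dots,a_{3m}$ and $b_1,\dots,b_{3m}$ to the two corridor sources and set $A=\{a_1,b_1\}$; choose one internal vertex $c_i$ on each of the $3m$ connecting (``exclusion'') paths --- such a vertex exists precisely because $r\geq 3$ --- and join $a_i$ and $b_i$ to $c_i$ by paths of length $r-1$. One then shows inductively that the two solution paths are forced down their respective tails (at each step the only alternative is to move towards some $c_i$, which lies at distance $r-1$ from the other path's tail), and consequently that neither path may ever meet any $c_i$, hence neither may cross an exclusion path. This single gadget simultaneously enforces the source--sink pairing and the confinement of each path to its corridor; without it, or something equivalent, the ``only if'' direction of your reduction does not go through. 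Your other deviations are harmless: the paper builds the length-$(r-1)$ connectors directly for each $r$ rather than subdividing a base case, and pads with single edges rather than corridors.
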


To prove this, we construct a Karp reduction from $3\SAT$ to $\MM(r,k)$. 
The instance of $\MM(r,k)$ we construct consists of a graph $G$ satisfying $\Delta(G) = 4$. 
We note afterwards that, when $r \geq 4$, we may even take $\Delta(G) = 3$. The case where $r = 2$ remains open and seems interesting, and we leave this as an open question. 

If we parameterise by treewidth, then the parameterised complexity of $\MM(r,k)$ is much more reasonable.
For this, recall that $\XP$ denotes the class of \textit{slicewise polynomial} problems. That is, problems solvable in time $\mathcal O(f(k) \cdot |x|^{f(k)})$, where $k \in \N$ is some parameter and $f$ is some computable function. In particular, we observe the following. 

\begin{restatable}{alphtheorem}{xp}\label{thm:xp}
The problem $\MM(r,k)$ lies in $\XP$ when parameterised by the treewidth and maximum degree of the input graph. If $r \leq 3$, we need only parameterise by treewidth. 
\end{restatable}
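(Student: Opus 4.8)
The plan is to solve $\MM(r,k)$ by dynamic programming over a tree decomposition, exploiting the fact that $(r-1)$-disjointness is a \emph{local} constraint. The starting point is the following reformulation, which I would prove first. Call a family of $A$--$Z$ paths $P_1,\dots,P_k$ \emph{valid} if it is pairwise $(r-1)$-disjoint. Since any two distinct paths in a valid family are at distance at least $r\geq 1$, they are in particular vertex-disjoint. Moreover, $\dist(P_i,P_j)\geq r$ holds for all $i\neq j$ if and only if every vertex $v$ satisfies $\min(r,\dist(v,P_i))+\min(r,\dist(v,P_j))\geq r$: if some connecting path of length $\leq r-1$ existed, any of its vertices would be a ``midpoint'' violating this inequality, and conversely concatenating two short paths through such a $v$ produces a short connection. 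The point of this reformulation is that the quantity $\min(r,\dist(v,P_i))$ is determined by the ball $B(v,r)=\set{u}{\dist(u,v)\leq r}$ together with the restriction of the paths to it, and when $\Delta(G)\leq d$ every such ball has at most $1+d+\dots+d^{r}$ vertices.

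With this in hand I would run a standard tree-decomposition dynamic program, enriched to carry the local data. Fixing a nice tree decomposition of width $w$, the state at a bag $X_t$ consists of: a label $\lambda(v)\in\{0,1,\dots,k\}$ for each $v\in X_t$ recording which $P_i$ contains $v$ (or none); the usual connectivity bookkeeping ensuring that each label class extends to a single $A$--$Z$ path; and, for each $v\in X_t$, the labelled isomorphism type of the ball $B(v,r)$. The midpoint inequality is then enforced on these local types as vertices are introduced and forgotten, and an accepting state corresponds to a valid family. The \emph{main obstacle}, and the only place the degree bound is used, is precisely the bookkeeping of this local distance information: the shortest path realising $\dist(v,P_i)$ may run through the not-yet-processed part of $G$, so the distances cannot simply be read off the processed subgraph. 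Carrying the full (bounded) labelled ball types sidesteps this difficulty, but the number of such types --- and hence the size of the state space --- is finite only because $\Delta(G)$ is bounded. Since $r$ and $k$ are constants, the number of states per bag is bounded by a function of $w$ and $d$, placing $\MM(r,k)$ in $\XP$ parameterised by $(w,\Delta)$.

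It remains to remove the dependence on the degree when $r\leq 3$. Here a violation $\dist(P_i,P_j)\leq r-1\leq 2$ is witnessed by a vertex $v$ with $\dist(v,P_i),\dist(v,P_j)\leq 1$, so the defining predicate only ever refers to the closed neighbourhood $N[v]$. It therefore suffices to track, for each $v\in X_t$, the set of path-labels occurring on $N[v]$ among the already-processed vertices --- a subset of $\{1,\dots,k\}$ of size at most $k$, independent of the degree --- and to reject according to the appropriate rule (for instance, when $r=3$, whenever $N[v]$ meets two distinct paths). The defining property of tree decompositions, that a vertex acquires no new neighbours once it is forgotten, guarantees that this set is correctly finalised at the moment $v$ is forgotten. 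Replacing the ball types by these neighbourhood label sets yields a state space bounded by a function of $w$ alone, so for $r\leq 3$ the problem lies in $\XP$ parameterised by treewidth only.
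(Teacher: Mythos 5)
Your overall route is the same as the paper's: observe that $(r-1)$-disjointness is an $r$-local condition on a vertex colouring that records which path each vertex lies on, and then run a dynamic program over a tree decomposition whose state space is bounded because (for general $r$) balls of radius $r$ have bounded size when the degree is bounded, and (for $r\le 3$) the condition only refers to closed neighbourhoods. The differences are organisational: the paper first guesses the $k$ terminal pairs (a polynomial truth-table reduction), which lets it certify that each colour class contains the required path purely by a local degree count (every coloured vertex has exactly $0$, $1$ or $2$ coloured neighbours according to whether it is a coincident terminal, a terminal, or internal), thereby avoiding connectivity bookkeeping altogether; and it then delegates the entire dynamic program to the cited framework for $m$-locally checkable problems with $m=\lfloor r/2\rfloor$. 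Your reformulation via the inequality $\min(r,\dist(v,P_i))+\min(r,\dist(v,P_j))\ge r$ is correct and is a perfectly good substitute for the paper's ``only one path index appears in each $m$-ball'' condition, and your $r\le 3$ argument (track the set of labels met by $N[v]$, finalised when $v$ is forgotten since all neighbours of $v$ share a bag with it) is sound.

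The gap is in the general-$r$ case, and it sits exactly where you wave your hands: ``carrying the full labelled ball types sidesteps this difficulty.'' It does not, by itself. A vertex $u\in B(v,r)$ with $r\ge 2$ need not share any bag with $v$, and may be introduced only in a part of the tree processed after $v$ has been forgotten; so the label of $u$ that you have guessed inside $v$'s ball type can never be compared against the label actually assigned to $u$ by the straightforward introduce/forget rules. Maintaining consistency of these guesses is the entire technical content of the theorem, and it is precisely the step the paper outsources to the locally-checkable framework. The standard repair does exist and uses the degree bound in the way you anticipate: replace each bag $X_t$ by $\bigcup_{v\in X_t} N_r(v)$, which is again a valid tree decomposition (the nodes whose enlarged bags contain a fixed vertex $u$ form a connected union of the subtrees of the vertices in $N_r(u)$), has width bounded by a function of the treewidth and $\Delta(G)$, and guarantees that every ball $B(v,r)$ lies entirely inside some bag containing $v$, so that your check really is evaluable bag-locally. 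Without this (or an equivalent propagation argument), the proof of the first sentence of the theorem is incomplete; with it, your argument goes through.
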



This theorem follows fairly quickly from the main results of \cite{bonomo2022new}, which provide a general framework for solving `$r$-locally checkable' problems in graphs of bounded treewidth via dynamic programming.

\subsection*{Notational remarks}

Throughout this note, let $[n] := \{1, \ldots, n\}$. If $G$ is a (undirected) graph, we denote by $V(G)$ its vertex set and $E(G)$ its edge-set. Given  $u, v \in V(G)$, we denote by $\dist_G(u,v)$ the length of a shortest path in $G$ connecting $u$ to $v$, adopting the convention that $\dist_G(u,v) = \infty$ if no such path exists. Given $m \geq 0$, $v \in V(G)$, we let 
$
N_m(v) = \{u \in V(G) : \dist_G(u,v) \leq m\}
$
denote the closed $m$-neighbourhood of $v$ in $V(G)$. 
By $\Delta(G)$, we denote the maximum degree in the graph $G$.

\section{The reduction}

In this section, we prove our main result, i.e., that the problem $\MM(r,k)$ is $\NP$-com\-plete for every $r\geq 3$, $k\geq 2$, which proves Conjecture 9.6 from \cite{georgakopoulos2023graph}. The main idea of the proof is to give a polynomial-time reduction from $3\SAT$. Since the graph in our construction will be of degree at most four, we can even deduce $\NP$-completeness when the problem is restricted to graphs of maximum degree four. At the end of this section, we make a remark on how the maximum degree of the constructed graph can be reduced to three, whenever $r\geq 4$. 

We begin with the following easy lemma. 

\begin{lemma}\label{lem:reduce-to-2}
    Given $r\geq 1$ and $k \geq 2$, there is a polynomial-time reduction from $\MM(r,2)$ to $\MM(r,k)$. 
\end{lemma}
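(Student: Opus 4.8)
The plan is to pad the given instance with $k-2$ copies of a trivially solvable, completely isolated $A$--$Z$ connection, so that any solution of the padded $\MM(r,k)$ instance is forced to solve the original $\MM(r,2)$ instance on its remaining two paths. Concretely, given an instance $(G, A, Z)$ of $\MM(r,2)$, I would form $G'$ by taking the disjoint union of $G$ with $k-2$ fresh edges $\{a_i, z_i\}$ for $i \in [k-2]$, and set $A' = A \cup \{a_1, \ldots, a_{k-2}\}$ and $Z' = Z \cup \{z_1, \ldots, z_{k-2}\}$. Since $r$ and $k$ are constants fixed by the target problem, this adds only a constant number of vertices and edges, so the construction is manifestly computable in polynomial (indeed linear) time in $|G|$.

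The single observation that makes the equivalence work is that two vertices lying in distinct connected components of $G'$ are at distance $\infty$, so any two paths living in different components are automatically $(r-1)$-disjoint (vacuously, since no connecting path exists). The forward direction is then immediate: if $P_1, P_2$ are $(r-1)$-disjoint $A$--$Z$ paths in $G$, then $P_1, P_2$ together with the $k-2$ isolated edges form $k$ pairwise $(r-1)$-disjoint $A'$--$Z'$ paths in $G'$, because $P_1, P_2$ are disjoint by hypothesis and each edge sits in its own component.

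The backward direction is the step I would treat most carefully, though I expect no real obstacle. Suppose $G'$ admits $k$ pairwise $(r-1)$-disjoint $A'$--$Z'$ paths. Each such path is connected and hence contained in a single component of $G'$. Within each added component $\{a_i, z_i\}$ the only $A'$--$Z'$ path is the edge itself, and since $r-1 \geq 0$ no two of the chosen paths can coincide (two identical paths are not even $0$-disjoint, as a length-$0$ path joins a shared vertex to itself). A pigeonhole argument then shows that at most one of the $k$ paths lies in each of the $k-2$ edge-components, so at least two lie inside $G$; restricting to any two of these yields the required $(r-1)$-disjoint $A$--$Z$ paths in $G$. The only points needing attention are this counting step and the convention that forbids reusing a padding edge, both of which follow directly from the definition of $(r-1)$-disjointness.
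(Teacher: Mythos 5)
Your construction is exactly the one used in the paper: pad with $k-2$ fresh edges $\{a_i,z_i\}$, add the $a_i$ to $A$ and the $z_i$ to $Z$, and observe that paths in distinct components are automatically $(r-1)$-disjoint. The paper dismisses the equivalence as clear, whereas you spell out the pigeonhole argument for the backward direction, but the approach and its correctness are the same.
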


\begin{proof}
    Given an instance $(G, A, Z)$ of $\MM(r,2)$, we construct an instance of $\MM(r,k)$ by adding $k-2$ pairs of vertices $(a_i, z_i)$ to $G$, each pair connected by an edge. We add each $a_i$ to $A$ and each $z_i$ to $Z$. The resulting instance of $\MM(r,k)$ clearly has a solution if and only if the original instance of $\MM(r,2)$ does.
\end{proof}

We now recall the statement of the main theorem.



\npc*


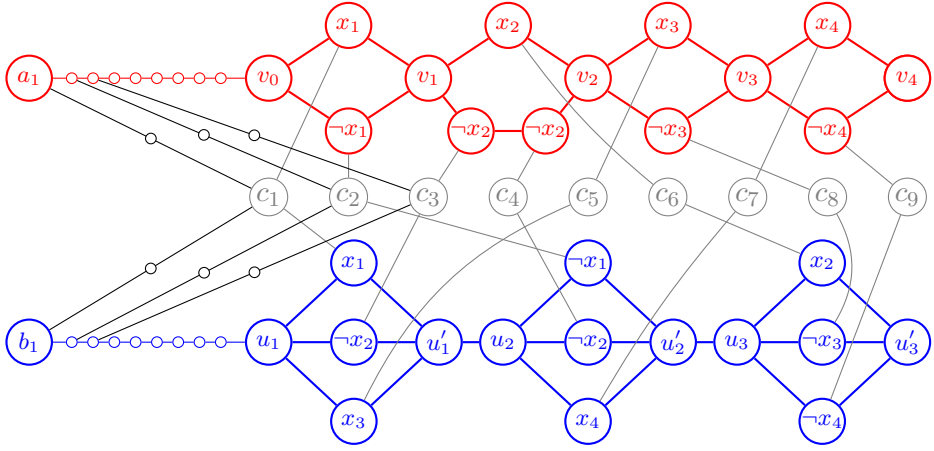
\begin{figure}
\centering

        
\def\minsize{17pt}
\def\minsizet{4pt}
\begin{tikzpicture}[scale=1.4]
\node (v0) at (0.75,0.5) [circle, red, thick, draw, inner sep=0pt, minimum size=\minsize] {\footnotesize $v_0$};
\node (v2) at (3.75,0.5) [circle, red, thick, draw, inner sep=0pt, minimum size=\minsize] {\footnotesize $v_2$};
\node (negx21) at (2.65,0) [circle, red, thick, draw, inner sep=0pt, minimum size=\minsize] {\footnotesize $\neg x_2$};
\node (negx22) at (3.35,0) [circle, red, thick, draw, inner sep=0pt, minimum size=\minsize] {\footnotesize $\neg x_2$};
\node (x2) at (3,1) [circle, red, thick, draw, inner sep=0pt, minimum size=\minsize] {\footnotesize $x_2$};
\foreach \i in {1,3,4} {
\node (negx\i) at (1.5*\i,0) [circle, red, thick, draw, inner sep=0pt, minimum size=\minsize] {\footnotesize $\neg x_\i$};
\node (x\i) at (1.5*\i,1) [circle, red, thick, draw, inner sep=0pt, minimum size=\minsize] {\footnotesize $x_\i$};
\node (v\i) at (1.5*\i+0.75,0.5) [circle, red, thick, draw, inner sep=0pt, minimum size=\minsize] {\footnotesize $v_\i$};
\draw[-, red, thick] (v\the\numexpr \i - 1 \relax) to (x\i);
\draw[-, red, thick] (v\the\numexpr \i - 1 \relax) to (negx\i);
\draw[-, red, thick] (v\i) to (x\i);
\draw[-, red, thick] (v\i) to (negx\i);
}
\draw[-, red, thick] (v1) to (negx21) to (negx22) to (v2) to (x2) to (v1);

\node (u1) at (0.75,-2) [circle, blue, thick, draw, inner sep=0pt, minimum size=\minsize] {\footnotesize $u_1$};
\node (u11) at (2.35,-2) [circle, blue, thick, draw, inner sep=0pt, minimum size=\minsize] {\footnotesize $u_1'$};
\node (u2) at (2.95,-2) [circle, blue, thick, draw, inner sep=0pt, minimum size=\minsize] {\footnotesize $u_2$};
\node (u21) at (4.55,-2) [circle, blue, thick, draw, inner sep=0pt, minimum size=\minsize] {\footnotesize $u_2'$};
\node (u3) at (5.15,-2) [circle, blue, thick, draw, inner sep=0pt, minimum size=\minsize] {\footnotesize $u_3$};
\node (u31) at (6.75,-2) [circle, blue, thick, draw,  inner sep=0pt, minimum size=\minsize, fill opacity=0.2, text opacity=1] {\footnotesize $u_3'$};

\node (L11) at (1.55,-1.25) [circle, blue, thick, draw, inner sep=0pt, minimum size=\minsize] {\footnotesize $x_1$};
\node (L12) at (1.55,-2) [circle, blue, thick, draw, inner sep=0pt, minimum size=\minsize] {\footnotesize $\neg x_2$};
\node (L13) at (1.55,-2.75) [circle, blue, thick, draw, inner sep=0pt, minimum size=\minsize] {\footnotesize $x_3$};
\draw[-,thick, blue] (u1) to (L11) to (u11) to (L12) to (u1) to (L13) to (u11) to (u2);

\node (L21) at (3.75,-1.25) [circle, blue, thick, draw, inner sep=0pt, minimum size=\minsize] {\footnotesize $\neg x_1$};
\node (L22) at (3.75,-2) [circle, blue, thick, draw, inner sep=0pt, minimum size=\minsize] {\footnotesize $\neg x_2$};
\node (L23) at (3.75,-2.75) [circle, blue, thick, draw, inner sep=0pt, minimum size=\minsize] {\footnotesize $x_4$};
\draw[-,thick, blue] (u2) to (L21) to (u21) to (L22) to (u2) to (L23) to (u21) to (u3);

\node (L31) at (5.95,-1.25) [circle, blue, thick, draw, inner sep=0pt, minimum size=\minsize] {\footnotesize $x_2$};
\node (L32) at (5.95,-2) [circle, blue, thick, draw, inner sep=0pt, minimum size=\minsize] {\footnotesize $\neg x_3$};
\node (L33) at (5.95,-2.75) [circle, blue, thick, draw, inner sep=0pt, minimum size=\minsize] {\footnotesize $\neg x_4$};
\draw[-,thick, blue] (u3) to (L31) to (u31) to (L32) to (u3) to (L33) to (u31);

\foreach \i in {1,...,9}{
\node (help\i) at (0.75*\i,-0.625) [circle, draw, gray, inner sep=1pt, minimum size=\minsizet, fill=white] {$c_\i$};
}
\draw[-, gray] (L11) to (help1) to (x1);
\draw[-, gray] (L21) to (help2) to (negx1);
\draw[-, gray] (L12) to (help3) to (negx21);
\draw[-, gray] (L22) to (help4) to (negx22);
\draw[-, gray] (L31) to (help6) to [bend left=7] (x2);
\draw[-, gray, bend left=20] (L13) to (help5); \draw[gray] (help5) to (x3);
\draw[-, gray] (L32) to [bend right=30](help8); \draw[gray] (help8) to (negx3);
\draw[-, gray, bend left=5] (L23) to (help7); \draw[gray] (help7) to (x4);
\draw[-, gray] (L33) to (help9) to (negx4);

\node (a0) at (-1.5,0.5) [circle, red, thick, draw, inner sep=0pt, minimum size=\minsize, fill=white, fill opacity=0.2, text opacity=1] {\footnotesize $a_1$};
\node (b0) at (-1.5,-2) [circle, blue, thick, draw, inner sep=0pt, minimum size=\minsize, fill opacity=0.2, text opacity=1] {\footnotesize $b_1$};

\node (v4) at (6.75,0.5) [circle, red, thick, draw, inner sep=0pt, minimum size=\minsize, fill=white, fill opacity=0.2, text opacity=1] {\footnotesize $v_4$};

\foreach \i in {1,...,8}{
\node (a\i) at (-1.3+0.2*\i, 0.5) [circle, draw, red, inner sep=0pt, minimum size=\minsizet, fill=white] {};
\node (b\i) at (-1.3+0.2*\i, -2) [circle, draw, blue, inner sep=0pt, minimum size=\minsizet, fill=white] {};
\draw[-,red] (a\i) to (a\the\numexpr \i - 1 \relax);
\draw[-,blue] (b\i) to (b\the\numexpr \i - 1 \relax);
}
\draw[-,red] (a8) to (v0);
\draw[-,blue] (b8) to (u1);
\draw[-] (a0) -- (help1) node[midway, circle, draw, inner sep=0pt, minimum size=\minsizet, fill=white] {};
\draw[-] (b0) -- (help1) node[midway, circle, draw, inner sep=0pt, minimum size=\minsizet, fill=white] {};
\draw[-] (a1) -- (help2) node[midway, circle, draw, inner sep=0pt, minimum size=\minsizet, fill=white] {};
\draw[-] (b1) -- (help2) node[midway, circle, draw, inner sep=0pt, minimum size=\minsizet, fill=white] {};
\draw[-] (a2) -- (help3) node[midway, circle, draw, inner sep=0pt, minimum size=\minsizet, fill=white] {};
\draw[-] (b2) -- (help3) node[midway, circle, draw, inner sep=0pt, minimum size=\minsizet, fill=white] {};
\end{tikzpicture}

\caption{An example of the reduction from $3\SAT$ to $\MM(3,2)$. The graph constructed above encodes the formula $(x_1 \vee \neg x_2 \vee x_3) \wedge (\neg x_1 \vee \neg x_2 \vee x_4) \wedge (x_2 \vee \neg x_3 \vee \neg x_4)$. For better overview, the dummy paths beginning in vertices $a_4, \dots, a_9, b_4, \dots, b_9$ are omitted.}
\label{fig:example-reduction}
\end{figure}

\begin{proof}
First, note that, given an instance of $\MM(r,k)$ and $k$ paths, one can check in polynomial time whether the paths form a feasible solution. Therefore, the problem  $\MM(r,k)$ is in $\NP$ for every choice of $r$ and $k$. 

We now turn to proving $\NP$-hardness by giving a polynomial-time transformation from $3\SAT$. By Lemma~\ref{lem:reduce-to-2}, it suffices to show hardness for $k=2$. Therefore, we give a reduction from $3\SAT$ to $\MM(r,2)$ for an arbitrary but fixed $r\geq 3$.

Consider an arbitrary instance of $3\SAT$ over Boolean variables $x_1, \ldots, x_n$ given by
\begin{equation*}
    \varphi = \bigwedge_{j = 1}^m (L_{j,1}\vee L_{j,2} \vee L_{j,3}), 
\end{equation*}
where each $L_{j,i}$ is a literal, i.e., $L_{j,i}\in \bigcup_{l=1}^n \{x_l, \neg x_l\}$.
The goal is to construct an instance of $\MM(r,2)$ for which the answer is `Yes' if and only if $\varphi$ is satisfiable.
We construct two graphs $G_1$ and $G_2$, before connecting them together with various paths to form a graph~$G$ on which our final instance for $\MM(r,2)$ is defined. 
    
First, we construct $G_1$ as follows (illustrated by the red subgraph in Figure~\ref{fig:example-reduction}):
We begin with introducing vertices labelled $v_0, \dots, v_n$.
For every $i \in [n]$, let $\alpha_0(i)$ denote the number of times that the literal $\neg x_i$ appears in $\varphi$ and let $\alpha_1(i)$ denote the number of times that the literal $x_i$ appears in $\varphi$. We connect $v_{i-1}$ to $v_i$ by two paths, one of $\alpha_0(i)+1$ edges, denoted $P_{i,0}$, and one of $\alpha_1(i)+1$ edges, denoted $P_{i,1}$ ($i\in[n])$. 
Last, we attach a path of $3m$ edges to $v_0$ and label the vertices in this path $a_1, \dots, a_{3m}$, where $a_1$ is the start vertex of this path.
Note that $G_1$ consists of $3m+(n+1)+m=\bigO(n+m)$ vertices. 

Intuitively, the graph $G_1$ plays the following role in our final construction: We will force one of the paths to never leave $G_1$ and to connect the vertices $a_1$ and $v_n$. On its way, it has to use exactly one of the paths $P_{i,0}, P_{i,1}$ for every $i \in [n]$. If it uses path $P_{i,b}$, this will correspond to an assignment of the variables of the SAT-instance where $x_i=\neg b$ ($b \in \{0,1\}$).

Next, we construct $G_2$ as follows (illustrated by the blue subgraph in Figure~\ref{fig:example-reduction}):
For every $i \in [m]$, we introduce vertices labelled $u_i, u_i', L_{i,1},  L_{i,2}, L_{i,3}$. We connect each $L_{i,j}$ to $u_i$ and to $u_i'$ and, for $i<m$, we connect $u_i'$ to $u_{i+1}$. Similarly as in the construction of~$G_1$, we attach a path of $3m$ edges to $u_1$ and label the vertices $b_1, \dots, b_{3m}$, where $b_1$ is the start vertex of this path.
Note that $G_2$ contains $3m+5m=\bigO(m)$ vertices. 

The intuitive role of $G_2$ in the final construction will be as follows: One of the paths will be forced to connect $b_1$ to $u_m'$ and never leave $G_2$. On its way, it has to visit one of the vertices labelled $L_{i,1},  L_{i,2}, L_{i,3}$ for every $i \in [m]$. If it visits, say $L_{i,1}$, this will correspond to the literal $L_{i,1}$ evaluating to 1 in the assignment given by the path in $G_1$. The existence of a path from $b_1$ to $u_m'$ then means that, in each clause, one of the literals evaluates to~1, i.e., the SAT-formula is satisfied.


    Next up, we connect these two graphs together. 
    Recall that the number of vertices in the path $P_{i,0}$ is the number of times that the literal $\neg x_i$ appears in $\varphi$. We connect each vertex in $P_{i,0}$ to a distinct vertex labelled $L_{j,l}\in G_2$ where $L_{j,l}=\neg x_i$ by a path or $r-1$ edges. Similarly, we connect the vertices in $P_{i,1}$ to vertices in $G_2$ labelled by literals that are $x_i$ by a path of length $r-1$ (illustrated by the gray paths in Figure~\ref{fig:example-reduction}).
    We call such a path between $G_1$ and $G_2$ an \emph{exclusion path} (because, since it has length $r-1$, in a solution to $\MM(r,2)$, it is not possible that both of it end vertices are visited by different paths). 

    Note that we have added precisely $3m$ exclusion paths and, since we assumed $r\geq 3$, every one of them contains a vertex. Choose one vertex on every exclusion path and label them by $c_1, \dots, c_{3m}$. The last step of our construction is to add paths of length $r-1$ from $a_i$ to $c_i$ and from $b_i$ to $c_i$  for all $i \in [3m]$ (illustrated by the black paths in Figure~\ref{fig:example-reduction}). We call these \emph{dummy paths.} 

    We define our final instance of $\MM(r,2)$ now as follows: We consider the graph~$G$ that consists of $G_1, G_2$, the exclusion paths and the dummy paths. The two sets that need to be connected by two paths are given by $A=\{a_1, b_1\}$ and $Z=\{v_n, u_m'\}$.
    We have already seen that $|G_1|+|G_2|= \bigO(n+m)$. Since every vertex in $G_1$ and $G_2$ is adjacent to at most one dummy path or extension path and these have length at most $r$, we obtain that $|G|= \bigO(rn+rm)$, which is polynomial. Also, note that the maximum degree of $G$ is four.

    Now, we turn to proving that there is a feasible solution to the defined instance of $\MM(r,2)$ if and only if there is a satisfying assignment to $\varphi$.
    

    First, suppose that $\varphi$ is satisfiable, so there is some assignment $f \colon  [n] \to \{0,1\}$ which satisfies $\varphi$. Consider the path $P$ from $a_1$ to $v_n$ defined by the sequence
    $$
    P = a_1, \dots, a_{3m}, v_0, P_{1, \neg f(1)}, v_1 , P_{2, \neg f(2)}, \ldots ,v_{n-1},  P_{n, \neg f(n)}, v_n.
    $$
    Now, for each $j \in [m],$ we have that at least one of $L_{j,1}, L_{j,2}, L_{j,3}$ evaluates to $1$ in the satisfying assignment $f$. Let $g(j)\in \{1,2,3\}$ be chosen such that each $L_{j,g(j)}$ evaluates to 1.
    Consider the path 
    $$
    P' = b_1, \dots, b_{3m}, u_1, L_{1,g(1)} , u_1', u_2, L_{2,g(2)}, u_2', \ldots, u_{m}, L_{m,g(m)}, u_m'. 
    $$
    It is left to argue that $P$ and $P'$ are $(r-1)$-disjoint. For this, note that $P\subseteq G_1$, $P'\subseteq G_2$ and the only pairs of vertices in $G_1, G_2$ that are at distance less than $r$ from each other are the pairs labelled $(L_{i,j}, x_l)$ if $L_{i,j}=x_l$ or $(L_{i,j}, \neg x_l)$ if $L_{i,j}= \neg x_l$. By construction, path $P'$ only uses $L_{i,j}$ if it evaluates to 1. Wlog.~assume $L_{i,j}=x_l$. Then $x_l$ is set to true in the assignment $f$, which implies that $P$ does not use the path $P_{l,1}$. Therefore, the paths are $(r-1)$-disjoint and form a feasible solution to this instance of $\MM(r,2)$.

    
    Conversely, suppose we have a solution of (simple) paths $(P,P')$ to the given instance of $\MM(r,2)$, where $P$ begins in $a_1$, and $P'$ begins in $b_1$. 
    As a first step, we argue that the head of $P$ is $a_1, \dots, a_{3m}, v_0$ and the head of $P'$ is $b_1, \dots, b_{3m}, u_1$. For this, note that the second vertex of $P$ is either $a_2$ or $P$ follows the dummy path to $c_1$. However, $P$ cannot visit $c_1$ because this vertex is at distance $r-1$ from $b_1$, which is in $P'$. Therefore, the second vertex of $P$ is $a_2$ and, similarly, the second vertex of $P'$ is $b_2$. It follows inductively that the head of $P$ is $a_1, \dots, a_{3m}, v_0$ and the head of $P'$ is $b_1, \dots, b_{3m}, u_1$.
    
    Note that any simple path starting in a vertex of $G_1$, leaving $G_1$ and ending in a vertex of $G_1 \cup G_2$ contains one of the vertices $c_i$. As these are at distance $r-1$ from $a_i$ and $b_i$ and we have already seen that $a_i\in P$ and $b_i \in P'$, we obtain that $P\subseteq G_1$ and $P' \subseteq G_2$. In particular, the end vertex of $P$ is $v_n$ and the end vertex of $P'$ is $u_m'$.
    It follows that $P$ contains exactly one of the paths $P_{i,0}, P_{i,1}$ for every $i \in [n]$. From this, we construct an assignment of the variables $x_1, \dots, x_n$ by setting $x_i=1$ if and only if $P$ contains $P_{i,0}$. 

    We argue that this assignment is satisfying. For this, note that the path $P'$ visits exactly one of the $L_{j,1}, L_{j,2}, L_{j,3}$ for every $j \in [m]$. We claim that each visited literal evaluates to 1 in the constructed assignment. Assume wlog.~that $L_{j,i}=x_l$. By construction, the distance between the vertex labelled $L_{j,i}$ and the path $P_{l,1}$ is $r-1$ so that $P'$ can only visit $L_{j,i}$ if $P$ does not visit $P_{l,1}$. This means that the variable $x_l$ is set to 1 so that the literal $L_{j,i}$ indeed evaluates to 1.

    This completes the proof of the fact that the answer to the constructed instance of $\MM(r,2)$ is `Yes' if and only if $\varphi$ has a satisfying assignment. Therefore, $\MM(r,2)$ is $\NP$-complete. Since our constructed graph has maximum degree four, we even obtain $\NP$-completeness when the problem is restricted to graphs of maximum degree four.
    %
    %
    %
    %
\end{proof}

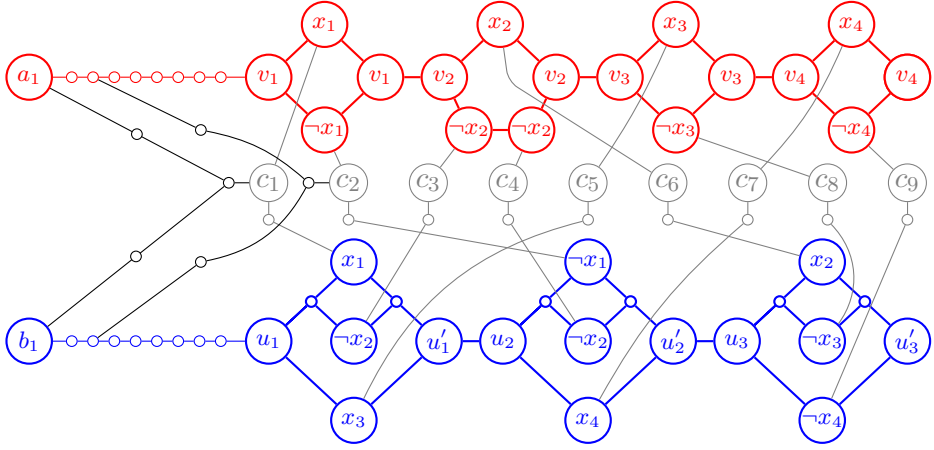
\begin{figure}
    \centering
\def\minsize{17pt}
\def\minsizet{4pt}
\begin{tikzpicture}[scale=1.4]
\node (v21) at (3.45,0.5) [circle, red, thick, draw, inner sep=0pt, minimum size=\minsize] {\footnotesize $v_2$};
\node (v2) at (2.4,0.5) [circle, red, thick, draw, inner sep=0pt, minimum size=\minsize] {\footnotesize $v_2$};
\node (negx21) at (2.63,0) [circle, red, thick, draw, inner sep=0pt, minimum size=\minsize] {\footnotesize $\neg x_2$};
\node (negx22) at (3.22,0) [circle, red, thick, draw, inner sep=0pt, minimum size=\minsize] {\footnotesize $\neg x_2$};
\node (x2) at (2.925,1) [circle, red, thick, draw, inner sep=0pt, minimum size=\minsize] {\footnotesize $x_2$};

\foreach \i in {1,3,4} {
\node (negx\i) at (0.75+\i*1.65-1.65+0.525,0) [circle, red, thick, draw, inner sep=0pt, minimum size=\minsize] {\footnotesize $\neg x_\i$};
\node (x\i) at (0.75+\i*1.65-1.65+0.525,1) [circle, red, thick, draw, inner sep=0pt, minimum size=\minsize] {\footnotesize $x_\i$};
\node (v\i) at (0.75+\i*1.65-1.65,0.5) [circle, red, thick, draw, inner sep=0pt, minimum size=\minsize] {\footnotesize $v_\i$};
\node (v\i1) at (0.75+\i*1.65-1.65+1.05,0.5) [circle, red, thick, draw, inner sep=0pt, minimum size=\minsize] {\footnotesize $v_\i$};
\draw[-, red, thick] (v\i) to (x\i) to (v\i1);
\draw[-, red, thick] (v\i) to (negx\i) to (v\i1);
}

\draw[-, red, thick] (v11) to (v2) to (negx21) to (negx22) to (v21) to (v3);
\draw[-, red, thick] (v2) to (x2) to (v21);
\draw[red, thick] (v31) to (v4);

\node (u1) at (0.75,-2) [circle, blue, thick, draw, inner sep=0pt, minimum size=\minsize] {\footnotesize $u_1$};
\node (u11) at (2.35,-2) [circle, blue, thick, draw, inner sep=0pt, minimum size=\minsize] {\footnotesize $u_1'$};
\node (u1h2) at (1.95,-1.625) [circle, blue, thick, draw, inner sep=0pt, minimum size=\minsizet] {};
\node (u1h1) at (1.15,-1.625) [circle, blue, thick, draw, inner sep=0pt, minimum size=\minsizet] {};
\node (u2) at (2.95,-2) [circle, blue, thick, draw, inner sep=0pt, minimum size=\minsize] {\footnotesize $u_2$};
\node (u21) at (4.55,-2) [circle, blue, thick, draw, inner sep=0pt, minimum size=\minsize] {\footnotesize $u_2'$};
\node (u2h1) at (3.35,-1.625) [circle, blue, thick, draw, inner sep=0pt, minimum size=\minsizet] {};
\node (u2h2) at (4.15,-1.625) [circle, blue, thick, draw, inner sep=0pt, minimum size=\minsizet] {};
\node (u3) at (5.15,-2) [circle, blue, thick, draw, inner sep=0pt, minimum size=\minsize] {\footnotesize $u_3$};
\node (u31) at (6.75,-2) [circle, blue, thick, draw, inner sep=0pt, minimum size=\minsize, fill opacity=0.2, text opacity=1] {\footnotesize $u_3'$};
\node (u3h1) at (5.55,-1.625) [circle, blue, thick, draw, inner sep=0pt, minimum size=\minsizet] {};
\node (u3h2) at (6.35,-1.625) [circle, blue, thick, draw, inner sep=0pt, minimum size=\minsizet] {};

\node (L11) at (1.55,-1.25) [circle, blue, thick, draw, inner sep=0pt, minimum size=\minsize] {\footnotesize $x_1$};
\node (L12) at (1.55,-2) [circle, blue, thick, draw, inner sep=0pt, minimum size=\minsize] {\footnotesize $\neg x_2$};
\node (L13) at (1.55,-2.75) [circle, blue, thick, draw, inner sep=0pt, minimum size=\minsize] {\footnotesize $x_3$};
\draw[-,thick, blue] (u1) to (u1h1) to (L11) to (u1h2) to (L12) to (u1h1) to (u1) to (L13) to (u11) to (u1h2);
\draw[-,thick, blue] (u11) to (u2);

\node (L21) at (3.75,-1.25) [circle, blue, thick, draw, inner sep=0pt, minimum size=\minsize] {\footnotesize $\neg x_1$};
\node (L22) at (3.75,-2) [circle, blue, thick, draw, inner sep=0pt, minimum size=\minsize] {\footnotesize $\neg x_2$};
\node (L23) at (3.75,-2.75) [circle, blue, thick, draw, inner sep=0pt, minimum size=\minsize] {\footnotesize $x_4$};
\draw[-,thick, blue] (u2) to (u2h1) to (L21) to (u2h2) to (L22) to (u2h1) to (u2) to (L23) to (u21) to (u2h2);
\draw[-,thick, blue] (u21) to (u3);

\node (L31) at (5.95,-1.25) [circle, blue, thick, draw, inner sep=0pt, minimum size=\minsize] {\footnotesize $x_2$};
\node (L32) at (5.95,-2) [circle, blue, thick, draw, inner sep=0pt, minimum size=\minsize] {\footnotesize $\neg x_3$};
\node (L33) at (5.95,-2.75) [circle, blue, thick, draw, inner sep=0pt, minimum size=\minsize] {\footnotesize $\neg x_4$};
\draw[-,thick, blue] (u3) to (u3h1) to (L31) to (u3h2) to (L32) to (u3h1) to (u3) to (L33) to (u31) to (u3h2);

\foreach \i in {1,...,9}{
\node (help\i) at (0.75*\i,-0.5) [circle, draw, gray, inner sep=1pt, fill=white] {$c_\i$};
\node (help\i1) at (0.75*\i,-0.85) [circle, draw, gray, inner sep=1pt, minimum size=\minsizet, fill=white] {};
}

\draw[-, gray] (L11) to (help11) to (help1) to (x1);
\draw[-, gray] (L21) to (help21) to (help2) to (negx1);
\draw[-, gray] (L12) to (help31) to (help3) to (negx21);
\draw[-, gray] (L22) to (help41) to (help4) to (negx22);
\draw[-, gray] (L31) to (help61) to (help6); \draw[gray, rounded corners] (help6) to (3,0.46) to (x2);
\draw[-, gray, bend left=25] (L13) to (help51); \draw[gray] (help51) to (help5) to [bend right=5] (x3);
\draw[-, gray] (L32) to [bend right=40](help81); \draw[gray] (help81) to (help8) to (negx3);
\draw[-, gray, bend left=10] (L23) to (help71); \draw[gray] (help71) to (help7) to [bend right=10] (x4);
\draw[-, gray] (L33) to (help91) to (help9) to (negx4);

\node (a0) at (-1.5,0.5) [circle, red, thick, draw, inner sep=0pt, minimum size=\minsize, fill=white, fill opacity=0.2, text opacity=1] {\footnotesize $a_1$};
\node (b0) at (-1.5,-2) [circle, blue, thick, draw,  inner sep=0pt, minimum size=\minsize, fill opacity=0.2, text opacity=1] {\footnotesize $b_1$};

\node (v4) at (6.75,0.5) [circle, red, thick, draw, inner sep=0pt, minimum size=\minsize, fill=white, fill opacity=0.2, text opacity=1] {\footnotesize $v_4$};

\foreach \i in {1,...,8}{
\node (a\i) at (-1.3+0.2*\i, 0.5) [circle, draw, red, inner sep=0pt, minimum size=\minsizet, fill=white] {};
\node (b\i) at (-1.3+0.2*\i, -2) [circle, draw, blue, inner sep=0pt, minimum size=\minsizet, fill=white] {};
\draw[-,red] (a\i) to (a\the\numexpr \i - 1 \relax);
\draw[-,blue] (b\i) to (b\the\numexpr \i - 1 \relax);
}
\draw[-,red] (a8) to (v1);
\draw[-,blue] (b8) to (u1);
\node (c11) at (0.375, -0.5) [circle, draw, inner sep=0pt, minimum size=\minsizet] {};
\node (c21) at (1.125, -0.5) [circle, draw, inner sep=0pt, minimum size=\minsizet] {};
\draw[-] (a0) -- (c11) node[midway, circle, draw, inner sep=0pt, minimum size=\minsizet, fill=white] {};
\draw[-] (b0) -- (c11) node[midway, circle, draw, inner sep=0pt, minimum size=\minsizet, fill=white] {};
\draw[-] (c11) to (help1);
\node (dummy1) at (0.1125,0) [circle, draw, inner sep=0pt, minimum size=\minsizet, fill=white] {};
\node (dummy2) at (0.1125,-1.25) [circle, draw, inner sep=0pt, minimum size=\minsizet, fill=white] {};
\draw[-] (a2) to (dummy1) to [bend left=15] (c21);
\draw[-] (b2) to (dummy2) to [bend right=25] (c21);
\draw[-] (c21) to (help2);
\end{tikzpicture}
\caption{An example of the transformation from $3\SAT$ to $\MM(4,2)$ with an input graph of maximum degree three.}
\label{fig:reduction-degree-three}
\end{figure}

To complete this section, we observe that, in the case $r\geq 4$, one can adjust the construction in the proof of Theorem~\ref{thm:np-complete} such that the resulting graph has degree at most three, as illustrated in Figure~\ref{fig:reduction-degree-three}. Note that, we can apply the same ideas to reduce the degrees of $G_1$ and $G_2$ in our proof, but it is not obvious how to reduce the degree of the vertices $c_1, \dots, c_{3m}$ in the case $r\leq 3$. More precisely, if $r\geq 4$, we can make the dummy paths beginning in $a_i$ and $b_i$ share their last edge to reduce the degree of $c_i$. This is not possible in the case $r= 3$ because then $a_i$ and $b_i$ would be at distance 2 so that it is immediate that no feasible solution exists. In other words, the dummy paths are only allowed to share their last edge if $2(r-2)\geq r$, which requires $r\geq 4$.

\begin{observation}
For every $r\geq 4$ and $k\geq 2$, the problem $\MM(r,k)$ is $\NP$-complete, even on graphs of degree at most three.
\end{observation}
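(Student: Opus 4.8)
The plan is to reuse the reduction from the proof of Theorem~\ref{thm:np-complete} almost verbatim, and then perform local surgery at each vertex of degree four so as to lower the maximum degree to three without altering the set of admissible routings for $P$ and $P'$ or the distances on which the correctness argument depends. As before, $\MM(r,k)$ lies in $\NP$, and by Lemma~\ref{lem:reduce-to-2} it is enough to treat $k=2$; note that the gadget of Lemma~\ref{lem:reduce-to-2} only adds isolated edges, so it preserves any degree bound. Thus I would give a reduction from $3\SAT$ to $\MM(r,2)$ producing a graph $G$ with $\Delta(G)\leq 3$. Inspecting the original construction, the only vertices of degree four are of three kinds: the internal branch vertices $v_i$ of $G_1$ (two edges entering from the pair $P_{i,0},P_{i,1}$ on the left and two leaving on the right), the vertices $u_i,u_i'$ of $G_2$ (three edges to $L_{i,1},L_{i,2},L_{i,3}$ plus one edge along the $u$-chain), and the chosen exclusion-path vertices $c_1,\dots,c_{3m}$ (two exclusion-path edges plus the two terminal edges of the dummy paths from $a_i$ and $b_i$).

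For the first two kinds the surgery is routine. I would split each $v_i$ into an ``incoming'' copy and an ``outgoing'' copy joined by a single edge, letting $P_{i,0},P_{i,1}$ run between the two copies and joining consecutive variable gadgets by one edge; each copy then has degree three. Likewise, for each clause gadget I would insert a constant number of helper vertices so that $u_i$ and $u_i'$ reach the three literal vertices through a branching of degree at most three, while keeping a route $u_i\to L_{i,j}\to u_i'$ for every $j$ (see Figure~\ref{fig:reduction-degree-three}). Crucially, both modifications only insert vertices and edges inside $G_1$ and inside $G_2$; they can only lengthen internal distances and never create new short connections between $G_1$ and $G_2$. Hence the distances that matter --- in particular the length $r-1$ of each exclusion path between a matching literal pair --- are unchanged, and the correctness argument of Theorem~\ref{thm:np-complete} transfers without change for these gadgets.

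The genuinely delicate point, and the reason the statement is restricted to $r\geq 4$, is the treatment of the vertices $c_i$. Here I cannot simply reroute inside one side, since $c_i$ carries the two terminal edges of the dummy paths from $a_i$ and from $b_i$, which are exactly what pins $P$ to $G_1$ and $P'$ to $G_2$. Instead I would let these two dummy paths share their last edge: introduce a single vertex $d_i$ adjacent to $c_i$ and route both dummy paths through $d_i$, so that $c_i$ acquires degree three (two exclusion-path edges and the edge to $d_i$). This keeps $\dist(c_i,a_i)=\dist(c_i,b_i)=r-1$, so $c_i$ and the new vertex $d_i$ remain too close to $b_i$ (resp.\ $a_i$) to be used by $P$ (resp.\ $P'$). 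The one quantity that changes is $\dist(a_i,b_i)$, which drops from $2(r-1)$ to $2(r-2)$ because of the shared edge. For the two solution paths to still be able to be $(r-1)$-disjoint we need $\dist(a_i,b_i)\geq r$, i.e.\ $2(r-2)\geq r$, which holds precisely when $r\geq 4$.

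I expect the main obstacle to be checking that the converse direction of the equivalence still goes through after introducing the crossing vertex $d_i$: whereas in Theorem~\ref{thm:np-complete} every $G_1$--$G_2$ path had to pass through some $c_i$, now it may instead cross through some $d_i$. I would therefore re-run the ``head'' argument using that $d_i$ lies at distance $r-2<r$ from both $a_i$ and $b_i$, concluding that neither solution path can reach $d_i$ or $c_i$ and hence that neither crosses between $G_1$ and $G_2$. Once this is verified, the ``Yes''-instance equivalence is identical to that of Theorem~\ref{thm:np-complete}, and since the modified graph has maximum degree three we obtain $\NP$-completeness of $\MM(r,k)$ on graphs of degree at most three for all $r\geq 4$, $k\geq 2$.
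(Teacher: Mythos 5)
Your proposal is correct and follows the paper's approach exactly: degree reduction inside $G_1$ and $G_2$ by splitting branch vertices and inserting helpers, and letting the two dummy paths at each $c_i$ share their last edge, which forces $\dist(a_i,b_i)=2(r-2)\geq r$ and hence the restriction $r\geq 4$. Your additional check that no solution path can cross via the new shared vertex $d_i$ (being at distance $r-2<r$ from both $a_i$ and $b_i$) is a valid and welcome elaboration of the detail the paper leaves implicit.
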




\section{Graphs of bounded treewidth}

To contrast the above, we now show that $\MM(r,k)$ is an $r$-locally checkable problem, in the sense of \cite{bonomo2022new}. This implies the existence of a polynomial time algorithm if the input graph is of bounded treewidth and bounded degree. 

We now state a definition of local checkability. This will be more restrictive than that of \cite{bonomo2022new}, but is sufficient for our purposes. For the full definition, see \cite[§~3]{bonomo2022new}. 

\begin{definition}[Simple $m$-locally checkable problem]
    Given $m \geq 1$, a \textit{simple $m$-locally checkable problem} is a problem of the following form: 

    \textit{Instance:}
    \begin{itemize}
        \item A finite graph $G$,
        \item a set $L$ of colours,
        \item subsets $L_v \subset L$ of allowable colours for every $v \in V(G)$,
        \item a polynomial-time-computable function $\chck(v,c)$, which takes in a vertex $v \in V(G)$ and an assignment $c$ of $N_m(v)$, and outputs either $0$ or $1$ depending on whether a certain condition is satisfied, where an \emph{assignment} is a map
        $
        c \colon U \to L
        $ 
        such that $c(u) \in L_u$ for every $u \in U$. 
    \end{itemize}
    
    
    \textit{Question:} Does there exist an assignment $c \colon V(G) \to L$  such that 
    $$
    \chck(v,c|_{N_m(v)}) = 1
    $$
    for all $v \in V(G)$?
\end{definition}

The main results of \cite{bonomo2022new} imply the following. 

\begin{theorem}[{\cite[Thm.~5.5.1, Cor.~6.0.3]{bonomo2022new}}]\label{thm:locally-check}
    Let $m \geq 1$.
    When parameterised by treewidth and maximum degree, every (simple) $m$-locally checkable problem is in $\XP$. If $m = 1$, we need only parameterise by treewidth. 
\end{theorem}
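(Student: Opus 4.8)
The plan is to prove the theorem by dynamic programming over a tree decomposition, after first massaging the decomposition so that every relevant neighbourhood sits inside a single bag. Fix an instance with colour set $L$, lists $L_v$, and check function $\chck$, together with a tree decomposition $(T, \{B_x\})$ of $G$ of width $t$, and suppose $\Delta(G) \le \Delta$.

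First I would establish a thickening lemma: define $B'_x = \bigcup_{v \in B_x} N_m(v)$. I claim $(T, \{B'_x\})$ is again a tree decomposition of $G$, now with the property that for every $v \in V(G)$ some bag contains all of $N_m(v)$; indeed $N_m(v) \subseteq B'_x$ for any $x$ with $v \in B_x$. Vertex- and edge-coverage are immediate since $B_x \subseteq B'_x$. The only nontrivial point is the connectedness of $S_w := \{x : w \in B'_x\} = \bigcup_{v \in N_m(w)} T_v$, where $T_v$ denotes the (connected) set of bags of the original decomposition containing $v$. To see $S_w$ is connected, take any $v$ with $\dist_G(v,w) \le m$ and a shortest $v$--$w$ path $w = w_0, \dots, w_\ell = v$ with $\ell \le m$; each $w_i$ satisfies $\dist_G(w, w_i) \le m$, so $T_{w_i} \subseteq S_w$, and since $w_i w_{i+1}$ is an edge the subtrees $T_{w_i}, T_{w_{i+1}}$ overlap. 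Thus $T_w, T_{w_1}, \dots, T_v$ is a chain of pairwise-overlapping connected subtrees lying inside $S_w$ and joining $T_v$ to $T_w$; as $v$ was arbitrary, $S_w$ is connected. Since $|N_m(v)| \le 1 + \Delta + \cdots + \Delta^m$, the new width is at most $(t+1)\sum_{i=0}^m \Delta^i - 1 =: w$, a function of $t, \Delta, m$ only.

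With this decomposition in hand, the bulk of the argument is a routine DP. After converting $(T, \{B'_x\})$ into a nice tree decomposition of the same width in polynomial time, I would root it and take the state at a node $x$ to be a list-respecting colouring $c_x \colon B'_x \to L$; there are at most $|L|^{w+1}$ such states, and since $L$ is part of the input this is $|x|^{f(t,\Delta,m)}$. The table entry records whether $c_x$ extends to a colouring of the vertices appearing below $x$ that satisfies $\chck(v, \cdot)$ for every $v$ whose check has already been \emph{scheduled}. I would schedule each $v$ at one fixed node whose bag contains $N_m(v)$: the set of such nodes is $\bigcap_{u \in N_m(v)} \{x : u \in B'_x\}$, an intersection of subtrees and hence itself a subtree, which is nonempty by the thickening lemma. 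The leaf/introduce/forget/join transitions then propagate the boolean in the standard way, and the check of $v$ is evaluated directly from the stored colouring when its scheduled node is processed. Each transition runs in $|x|^{O(w)}$ time and there are polynomially many nodes, so the algorithm runs in $|x|^{f(t,\Delta,m)}$ time, giving membership in $\XP$.

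Finally, for the refinement when $m=1$ I would avoid the thickening step entirely, since it is precisely there that the dependence on $\Delta$ enters (a star already forces $N_1$ of its centre to have size $\Delta+1$). Instead I would run the DP on the original width-$t$ decomposition with states being bag colourings $B_x \to L$, and verify the check of each vertex $v$ at its forget node: the subtree handled below that node contains all of $T_v$ and hence every neighbour of $v$, so the check can be finalised there by carrying, for each active vertex, only a bounded running summary of the already-revealed neighbour colours. This keeps the state count at $|x|^{f(t)}$, yielding membership in $\XP$ with treewidth as the sole parameter. The main obstacle throughout is the thickening lemma --- specifically the verification of the subtree condition for the $B'_x$ --- together with checking, in the $m=1$ case, that the per-vertex summary needed to finalise a general check stays bounded, so that no dependence on $\Delta$ creeps back in.
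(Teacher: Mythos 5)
Your first half is sound, and it is essentially the standard route. Note first that the paper contains no proof of Theorem~\ref{thm:locally-check} at all: it is imported wholesale from \cite{bonomo2022new}, so the comparison is with that framework. For the treewidth-plus-degree clause your argument is correct and matches the known one: the verification that $S_w=\bigcup_{v\in N_m(w)}T_v$ is connected via a shortest path whose vertices all lie in $N_m(w)$ is exactly right, the width bound $(t+1)\sum_{i=0}^m\Delta^i-1$ is fine, scheduling each vertex at a node whose thickened bag contains $N_m(v)$ is legitimate (nonemptiness is all you need; the Helly argument is a bonus), and since $|L|$ is part of the input the state count $|L|^{\bigO(w)}$ correctly lands you in $\XP$ rather than $\FPT$. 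The only small omission is that you must also \emph{obtain} a tree decomposition of width $t$; computing one in time $n^{\bigO(t)}$ is standard and harmless within $\XP$.

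The $m=1$ clause, however, contains a genuine gap, and it is precisely at the point you flagged but then waved through: the ``bounded running summary'' of already-forgotten neighbour colours does not exist for an arbitrary polynomial-time check function. Under the definition as stated in this paper, $\chck$ is part of the \emph{instance}, so consider $G=K_{1,n}$ (treewidth $1$), $L=\{0,1\}$, all lists equal to $L$, leaf checks constantly $1$, and the centre check evaluating a hard-coded 3-CNF $\varphi$ on the tuple of leaf colours: a valid assignment exists if and only if $\varphi$ is satisfiable, so an $\XP$ algorithm parameterised by treewidth alone for arbitrary checks would decide $\SAT$ in polynomial time. No DP, however clever, can close this; the statement needs a hypothesis. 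What \cite{bonomo2022new} actually proves for the treewidth-only case requires the check to be presented through what they call a partial neighbourhood system --- in effect, exactly your ``summary'': a polynomially bounded set of summary states, updatable one neighbour at a time, through which $\chck$ factors. The statement of Theorem~\ref{thm:locally-check} in this paper elides that hypothesis, and your proposal inherits the elision. For the application actually made here the repair is easy: the check constructed in Lemma~\ref{lem:local-check} depends only on the common path index among coloured vertices of $N_1(v)$ (or a conflict flag) together with the number of coloured neighbours capped at $3$, i.e.\ $\bigO(k)$ summary states per vertex, so your forget-node DP does go through for that problem --- but you must verify this property of the specific check rather than assert a bounded summary for general ones.
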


In order to make efficient use of this theorem, we introduce the following variant of the metric-Menger decision problem. 

       

\begin{problem*}
        Given $r, k \geq 1$, define $\MM_\mathrm{p}(r, k)$ as follows:
        
            \textit{Instance:} Finite graph $G$, and $k$ pairs of terminals $(s_1, t_1), \ldots, (s_k, t_k)$ in $V(G)$.
            
            \textit{Question:} Do there exist $k$ pairwise $(r-1)$-disjoint paths $P_1, \ldots P_k$, such that $P_i$ connects $s_i$ to $t_i$?
    \end{problem*}

Note that, if we set $r = 1$, then we recover precisely the classical disjoint paths problem.  
One can also note the following relationship between the two variants of the metric Menger problem.

\begin{lemma}\label{lem:mmp-to-mm}
    There is a polynomial-time truth table reduction\footnote{Recall that, in a \emph{truth table reduction}, we construct multiple but polynomially many instances of the problem we reduce to, such that the output of the original problem can be expressed as a polynomial-time computable function of the outputs of the constructed instances. By contrast, in a transformation, one needs to construct a single instance of the problem we reduce to, to which the answer is `Yes' if and only if the answer to the original instance is `Yes'.} from $\MM(r,k)$ to $\MM_\mathrm{p}(r,k)$. Given an instance of $\MM(r,k)$, the constructed instances of $\MM_\mathrm{p}(r,k)$ have the same input graph as the original problem. 
\end{lemma}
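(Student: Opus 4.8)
The plan is to exploit the fact that the two problems differ only in how endpoints are specified: $\MM(r,k)$ asks for $k$ paths running from the set $A$ to the set $Z$ without prescribing their endpoints or how they are paired up, whereas $\MM_{\mathrm p}(r,k)$ fixes the $k$ terminal pairs in advance. Since $k$ is a fixed constant, I would simply enumerate over all possible endpoint choices. First I would note that in any feasible solution to $\MM(r,k)$ the $k$ paths are pairwise $(r-1)$-disjoint, and since $r \geq 1$ this forbids them from sharing a vertex; hence their start vertices lie at $k$ distinct points of $A$ and their end vertices at $k$ distinct points of $Z$.

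The reduction then runs as follows. For every ordered pair of tuples $(s_1, \dots, s_k)$ of distinct vertices of $A$ and $(t_1, \dots, t_k)$ of distinct vertices of $Z$, construct the instance of $\MM_{\mathrm p}(r,k)$ on the \emph{same} graph $G$ with terminal pairs $(s_1,t_1), \dots, (s_k,t_k)$. Ranging over ordered tuples captures every possible pairing of chosen starts with chosen ends. The number of instances produced is at most $|A|^k \cdot |Z|^k \leq |V(G)|^{2k}$, which is polynomial in the input size because $k$ is constant, and each instance is clearly computable in polynomial time and re-uses the original graph, exactly as required by the lemma.

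For correctness, I would declare the answer to the $\MM(r,k)$ instance to be \emph{Yes} precisely when at least one of the constructed $\MM_{\mathrm p}(r,k)$ instances is a Yes-instance; the combining function is thus the disjunction of the individual outputs, which is polynomial-time computable, so this is indeed a (disjunctive) truth-table reduction in the sense of the footnote. In the forward direction, a feasible solution to $\MM(r,k)$ has distinct endpoints $a_1, \dots, a_k \in A$ and $z_1, \dots, z_k \in Z$ together with a pairing, and the very same paths witness a Yes-answer for the corresponding constructed instance. Conversely, any feasible solution to one of the constructed instances consists of $k$ pairwise $(r-1)$-disjoint paths, each of which runs from a vertex of $A$ to a vertex of $Z$, and so is immediately a feasible solution to $\MM(r,k)$.

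The only real point requiring care, and the step I would flag as the main obstacle, is the polynomiality of the enumeration: the bound $|V(G)|^{2k}$ is polynomial only because $r$ and $k$ are fixed parameters of the problem rather than part of the input. If $k$ were allowed to grow with the instance, the number of guesses would become super-polynomial and this brute-force pairing would break down. A secondary point to get right is to enumerate \emph{ordered} tuples (equivalently, all pairings), so that the specific correspondence between terminals demanded by $\MM_{\mathrm p}(r,k)$ is exhausted, rather than merely the unordered sets of endpoints.
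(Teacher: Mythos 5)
Your proposal is correct and follows essentially the same route as the paper: enumerate the $\bigO(|V(G)|^{2k})$ ways of choosing terminal pairs from $A \times Z$, build one $\MM_\mathrm{p}(r,k)$ instance on the same graph for each choice, and take the disjunction of the answers. The extra observations you make (that feasible endpoints must be distinct, and that polynomiality hinges on $k$ being fixed) are accurate but not needed beyond what the paper records.
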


\begin{proof}
    Consider an instance of $\MM(r,k)$ with input graph $G$ and $A, Z \subset V(G)$. 
    There are $\mathcal O(n^{2k})$ ways to choose $k$ pairs of terminals $(s_1, t_1), \ldots, (s_k, t_k)$, where each $s_i \in A$, $t_i \in Z$. Each such choice determines an instance of $\MM_\mathrm{p}(r,k)$. The original instance of $\MM(r,k)$ has a solution if and only if at least one of the constructed instances of $\MM_\mathrm{p}(r,k)$ has a solution. 
\end{proof}

\begin{lemma}\label{lem:local-check}
    There is a polynomial-time transformation from $\MM_\mathrm{p}(r,k)$ to a simple $\lfloor \tfrac r 2\rfloor$-locally checkable problem on the same input graph. 
\end{lemma}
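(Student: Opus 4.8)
The plan is to encode an instance of $\MM_\mathrm{p}(r,k)$ as a simple $\lfloor r/2 \rfloor$-locally checkable problem by letting the colour of each vertex record, for each of the $k$ path indices, whether that vertex lies on path $P_i$ and, if so, how it sits inside $P_i$ (whether it is an endpoint, and which of its neighbours are its predecessor/successor along the path). Concretely, I would take the colour set $L$ to be tuples that, for each $i \in [k]$, assign a status from the set $\{\text{off}\} \cup \{(\text{incoming}, \text{outgoing})\}$, where the incoming/outgoing data name the (at most two) incident vertices used by $P_i$ at that vertex, together with a flag marking whether the vertex is an endpoint. The allowable colour sets $L_v$ then encode the terminal constraints: we force $s_i$ and $t_i$ to be endpoints of $P_i$ and forbid all other vertices from being endpoints of $P_i$. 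Because each colour must name specific neighbours, $|L_v|$ is bounded by a function of $\deg(v)$ and $k$, which is fine since local checkability is later parameterised by maximum degree.

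First I would verify that the \emph{path structure} of each $P_i$ is enforceable by a purely local (radius $1$) condition on the colouring: a vertex coloured as lying on $P_i$ must have its named predecessor/successor neighbours also coloured as lying on $P_i$ and pointing back consistently, every non-endpoint on $P_i$ has exactly one predecessor and one successor, and each endpoint has exactly one. Standard arguments show that such a locally consistent edge-orientation, together with the endpoint constraints at $s_i,t_i$, forces the set of $P_i$-vertices to decompose into the path from $s_i$ to $t_i$ plus possibly some disjoint cycles; the cycles can be excluded either by a connectivity/acyclicity bookkeeping colour or, more simply, by noting they never help and can be pruned, so I would argue that a valid colouring yields genuine $s_i$-$t_i$ paths and conversely.

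The heart of the lemma — and the reason the locality radius is $\lfloor r/2\rfloor$ rather than $1$ — is encoding the $(r-1)$-\emph{disjointness} constraint between distinct paths. Two paths $P_i, P_j$ fail to be $(r-1)$-disjoint exactly when some vertex of $P_i$ lies within distance $r-1$ of some vertex of $P_j$, equivalently when $\dist_G(u,v) \le r-1$ for some $u \in P_i$, $v \in P_j$. I would capture this by defining $\chck(w, c|_{N_m(w)})$ with $m = \lfloor r/2 \rfloor$ to reject whenever the local ball $N_m(w)$ simultaneously contains a vertex coloured as on $P_i$ and a vertex coloured as on $P_j$ with $i \ne j$ whose $G$-distance is at most $r-1$. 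The key geometric observation is that any two vertices $u,v$ at distance $\le r-1$ lie in the common ball $N_{\lfloor r/2 \rfloor}(w)$ for a suitably chosen midpoint $w$ on a shortest $u$-$v$ path; hence every violating pair is visible within the radius-$\lfloor r/2\rfloor$ neighbourhood of at least one vertex, so a radius-$\lfloor r/2\rfloor$ check detects all violations, and conversely a colouring passing every check is genuinely $(r-1)$-disjoint. Since $\chck$ need only inspect $N_m(w)$ and compute distances inside it, it runs in polynomial time.

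\textbf{The main obstacle} I expect is making the midpoint argument tight so that radius exactly $\lfloor r/2\rfloor$ (and not $r-1$ or $r-1$ rounded up) suffices in both parities of $r$: when $r-1$ is even the midpoint is an exact centre at distance $(r-1)/2 \le \lfloor r/2\rfloor$ from both endpoints, while when $r-1$ is odd one must check that the nearer of the two central vertices still lies within $\lfloor r/2\rfloor$ of both $u$ and $v$ and that distances within $N_m(w)$ can be computed using only the local subgraph (rather than requiring a global shortest path that leaves the ball). I would handle this by arguing that a shortest $u$-$v$ path witnessing $\dist_G(u,v)\le r-1$ stays inside $N_m(w)$ for its midpoint $w$, so the distance test is correctly evaluable from the local data, and checking the two parities of $r$ separately to confirm the radius bound is exactly $\lfloor r/2\rfloor$. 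The remaining bookkeeping — bounding $|L|$ and verifying $\chck$ is polynomial-time — is routine.
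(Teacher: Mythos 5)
Your proposal follows essentially the same route as the paper: colour each vertex by which path (if any) it lies on, enforce the path structure of each $P_i$ by a radius-$1$ degree/adjacency condition at terminals and internal vertices (accepting that stray cycles may appear in a colour class and arguing they are harmless), and detect violations of $(r-1)$-disjointness via the midpoint observation that any two vertices at distance at most $r-1$ both lie in $N_{\lfloor r/2\rfloor}(w)$ for the midpoint $w$ of a shortest connecting path. The paper's colours are leaner --- a pair $(i,j)$ where $j\in\{0,1,2\}$ simply prescribes how many neighbours of the vertex must be coloured, rather than naming predecessor/successor --- but this is cosmetic.

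The one substantive difference is your $\chck$ function: the paper rejects as soon as two vertices with distinct path indices both appear in a single ball $N_m(v)$, whereas you additionally require their distance to be at most $r-1$ before rejecting. Your version is the more careful one. When $r$ is even we have $2\lfloor r/2\rfloor = r$, so two paths that are exactly at distance $r$ (hence legitimately $(r-1)$-disjoint) can both meet the $\lfloor r/2\rfloor$-ball around the midpoint of a shortest connecting path; the paper's co-membership test would reject such a valid solution, while your distance test correctly accepts it. Your worry about evaluating distances from local data is also resolved correctly: a shortest path of length at most $r-1$ between the two offending vertices stays inside $N_m(w)$ for its midpoint $w$, so every genuine violation is witnessed locally, and since distances in the induced subgraph on $N_m(w)$ only overestimate distances in $G$, no false positives arise. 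So your argument is sound and, in the even-$r$ case, repairs a small imprecision in the check function as the paper states it.
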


\begin{proof}
    Let $G$ be the input graph and
    let $\{(s_1, t_1), \ldots, (s_k, t_k)\}$ denote the input terminals. To ease notation, let $m = \lfloor \tfrac r 2 \rfloor$. 

    We construct an instance of an $m$-locally checkable problem which is equivalent to $\MM_\mathrm{p}(r,k)$.
    The set of possible colours $L$ is taken to be 
    $$
    L = ([k] \times \{0,1,2\}) \cup \{\bot\}. 
    $$
    Let $v \in V(G)$. We now define $L_v \subset L$. If $v = s_i = t_i$ for some $i \in [k]$, then set 
    $
    L_v = \{(i, 0)\}
    $. If there is some $i \in [k]$ such that $s_i \neq t_i$ and $v \in \{s_i, t_i\}$, then set $L_v = \{(i, 1)\}$. 
    Otherwise we set 
    $$
    L_v = ([k] \times \{2\}) \cup \{\bot\}.
    $$
    Given an assignment $c$, we say that a vertex $u$ is \textit{coloured} if $c(u) \neq \bot$. 
    
    The function $\chck$ is now defined as follows. Given $v \in V(G)$ and an assignment $c : N_m(v) \to L$, we set $\chck(v,c) = 1$ if and only if both of the following hold: 
    \begin{itemize}
        \item For all coloured $u, u' \in N_m(v)$, say 
        $c(u) = (i,j)$ and $c(u') = (i',j')$, 
        we have that $i = i'$, and 
        
        \item if $c(v) = (i,j) \neq \bot$, then exactly $j$ vertices adjacent to $v$ are coloured. 
    \end{itemize}
    This concludes the construction. It is clear this instance can be built in polynomial time. 

    We now show that the two instances are equivalent. 
    Suppose there exists a set of pairwise $(r-1)$-disjoint paths $P_1, \ldots , P_k$ where $P_i$ connects $s_i$ to $t_i$. We construct an assignment $c \colon V(G) \to L$ to the instance above via:
    $$
    c(v) = 
    \begin{cases}
        (i, 2) &\text{if $v \in V(P_i)$, $v \neq s_i$ and $v \neq t_i$, }\\
        (i, 1) &\text{if $s_i \neq t_i$, and $v \in \{s_i, t_i\}$,  }\\
        (i, 0) &\text{if $v = s_i = t_i$, }\\
        \bot &\text{otherwise.}
    \end{cases}
    $$
    It is clear that $c$ is well-defined, and since the $P_i$ are pairwise $(r-1)$-disjoint, we certainly have that $c$ is a valid solution to the $m$-locally checkable problem constructed above. 
    
    Conversely, suppose $c \colon  V(G) \to L$ is a valid assignment which solves the above problem. For each $i \in [k]$, let 
    $$
    Q_i = \{v \in V(G) : \text{$c(v) = (i,j)$ for some $j \in \{0,1,2\}$}\}.
    $$
    Firstly, we claim that every point in $Q_i$ lies at distance at least $r$ from $Q_j$, for $j \neq i$. Indeed, suppose not, and let $P$ be a path of length at most $r-1$ connecting $Q_i$ to $Q_j$. Let $v_0, \ldots v_{r-1}$ be the sequence of vertices appearing along $P$. Since $2m \geq r-1$, we have in any case that both 
    $$
    N_m(v_m) \cap Q_i \neq \emptyset \ \ \text{and} \ \  N_m(v_m) \cap Q_j \neq \emptyset.
    $$
    Thus, $
    \chck(v_m,c|_{N_m(v_m)}) = 0
    $ by definition, contradicting the assumption that $c$ is a valid assignment. 
    Now, each $Q_i$ is a (possibly disconnected) subgraph where every vertex has degree 2, except for $s_i$ and $t_i$ which have degree 1 if they are distinct and degree 0 otherwise. In particular, $Q_i$ is forced to contain a path connecting $s_i$ to $t_i$. Let $P_i$ denote this path.  Repeating this for every $i$, we see that the resulting paths $P_1, \ldots, P_k$ form a solution to $\MM_\mathrm{p}(r,k)$. 
\end{proof}

If we combine Theorem~\ref{thm:locally-check} with Lemmas~\ref{lem:mmp-to-mm} and \ref{lem:local-check}, we immediately deduce our second theorem. Note that it is important that, in Lemmas~\ref{lem:mmp-to-mm} and \ref{lem:local-check}, the input graph does not change in the reductions. This means that the properties of bounded treewidth and bounded degree are preserved by the reductions. 

\xp*

\section{Open questions}

As discussed in the introduction, it is known that $\MM(1,k)$ is solvable in polynomial time for every $k \geq 1$. The construction employed in the proof of Theorem~\ref{thm:np-complete} in the case where $r \geq 3$ does not seem to adapt well to the $r = 2$ case. This suggests the following question.

\begin{question}
    Given $k \geq 2$, how hard is $\MM(2, k)$? Is it a good candidate for an $\NP$-intermediate problem?
\end{question}

Recall that a decision problem in $\NP$ is said to be \textit{$\NP$-intermediate} if it does not lie in~$\P$, nor is it $\NP$-complete. It is known that $\NP$-intermediate problems exist if and only if $\P \neq \NP$.

\medskip

We have seen (cf. Theorem~\ref{thm:xp}) that $\MM(r,k)$ is solvable in polynomial time for graphs of bounded treewidth and bounded degree. It is therefore natural to ask if the same is true for other structural restrictions. 

\begin{question}
    If we restrict the input to planar graphs, can we solve $\MM(n,k)$ efficiently? What about minor excluded graphs?
\end{question}

\subsection*{Acknowledgements}

The authors thank Agelos Georgakopoulos for a careful reading of an old version of this paper.

\bibliographystyle{abbrv}
\bibliography{references}

\end{document}